\definecolor{webgreen}{rgb}{0,.5,0}
\definecolor{webbrown}{rgb}{.6,0,0}
\begin{document}

\vspace*{2.1cm}

\theoremstyle{plain}
\newtheorem{theorem}{Theorem}
\newtheorem{corollary}[theorem]{Corollary}
\newtheorem{lemma}[theorem]{Lemma}
\newtheorem{proposition}[theorem]{Proposition}
\newtheorem{obs}[theorem]{Observation}
\newtheorem{claim}[theorem]{Claim}

\theoremstyle{definition}
\newtheorem{definition}[theorem]{Definition}
\newtheorem{example}[theorem]{Example}
\newtheorem{remark}[theorem]{Remark}
\newtheorem{conjecture}[theorem]{Conjecture}

\begin{center}

{\Large\bf A generalization of perfect codes in the presence of star multiset transpositions} 

\vskip 1cm
\large
Italo J. Dejter

University of Puerto Rico

Rio Piedras, PR 00936-8377

\href{mailto:italo.dejter@gmail.com}{\tt italo.dejter@gmail.com}
\end{center}

\begin{abstract}
Let $0<\ell\in\mathbb{Z}$. The notion of an efficient dominating set or perfect code $S$ of a graph $G$ is generalized to that of an efficient dominating$\,^\ell$-set or perfect$^\ell$code, of the graph $G$, meaning that each vertex $v$ of $V(G)\setminus S$ has exactly $\ell$ neighbors in $S$, instead of just one neighbor. Such generalization is applied to star $j$-set transposition graphs based on permutations of multisets with each symbol repeated $j$ times, ($j\in\{\ell,\ell-1\}$). In such vertex-transitive graphs this approach produces total colorings, efficient dominating sets, also called perfect codes, etc. 
\end{abstract}

\section{Introduction}\label{s1}

Let $0<\ell\in\mathbb{Z}$, let $G=(V(G),E(G))$ be a finite graph of girth larger than 3 and let $S\subseteq V(G)$. We say that $S$ is an {\it efficient dominating$\,^\ell\!$-set} (E$\,^\ell$-set) or a {\it perfect$\,^\ell\!$code} if for each $v\in V(G)\setminus S$ there are exactly $\ell$ vertices $v^0,v^1,\ldots,v^{\ell-1}$ in $S$ such that $v$ is adjacent to  $v^i$, for each $i\in[\ell]=\{0,\ldots,\ell-1\}$. Since we assume that the girth of $G$ is larger than 3, then the subset $S(v)=\{v^0,v^1,\ldots,v^{\ell-1}\}$ of $V(G)$ is an independent set of $G$ that we call a  {\it dominating $\ell$-set} (D$\ell$S) of $v$ {\it with respect to} $S$ ({\it wrt} $S$). In particular: 

\begin{enumerate}\item[\bf(a)] if $\ell=1$, then $S$ is an {\it efficient dominating set} 
(E-set) \cite{AK,BBS,gen,D73,D76,edig}, or {\it 1-perfect code} \cite{Borges,D80}; in this case, $S$ provides a perfect packing of $G$ by balls of radius 1, or {\it 1-spheres}; if $G$ is $r$-regular,  the sphere packing condition  $|V(G)|=(r+1)|S|$ is a necessary condition for $S$ to be an E-set of $G$ \cite{D73};
\item[\bf(b)] if $\ell>1$, then $v$ is the only vertex of $G$ in the intersection of the 1-spheres of  $v^0,v^1,\ldots,v^{\ell-1}$; we deal with this extension case of an E-set from Section~\ref{Galphak} on.\end{enumerate}  

In Section~\ref{s2}, we recall the definition of the vertex-transitive star $\ell$-set transposition graphs $G =ST^\ell_k$, ($1<k\in\mathbb{Z}$), 
a particular case of star multiset transposition graphs treated in \cite{faltaba} in a context of determining their Gray codes and Hamilton paths and cycles.

In Sections~\ref{Galphak} and \ref{s4}, we find E$\,^\ell$-sets and E$\,^{\ell-1}$-sets of the graphs $ST^\ell_k$, respectively, by adapting the setting of E-chains in \cite{D73} with applications to total colorings of graphs, error-correcting codes and networks; (see also Remark~\ref{appl}).

\section{Star $\ell$-set transposition graphs}\label{s2}

Let $0<\ell\in\mathbb{Z}$ and $2\le k\in\mathbb{Z}$. We say that a string over the alphabet $[k]=\{0,\ldots,k-1\}$ that contains exactly $\ell$ occurrences of $i$, for each $i\in[k]$, is an $\ell$-{\it set permutation}.
In denoting specific $\ell$-set permutations, commas and brackets will be usually omitted. 
Let $V^\ell_k$ be the set of all $\ell$-set permutations of length $k\ell$. 

Let $ST^\ell_k$ be the graph on vertex set $V^\ell_k$ with an edge between any two vertices $v=v_0v_1\cdots v_{k\ell-1}$ and $w=w_0w_1\cdots w_{k\ell-1}$ that differ in a {\it star $\ell$-set transposition}, meaning that it is obtained by swapping the first entry $v_0$ of $v=v_0v_1\cdots v_{k\ell-1}\in V^\ell_k$ with some entry $v_j$, ($j\in[k\ell]\setminus\{0\}$), whose value differs from that of  $v_0$, so $v_j\ne v_0$, thus obtaining either $w=w_0\cdots w_j\cdots w_{k\ell-1}=v_j\cdots v_0\cdots w_{k\ell-1}$ or $w=w_0\cdots w_{k\ell-1}=v_{k\ell-1}\cdots v_0$. 

Note that $ST^\ell_k$ has $\frac{(k\ell)!}{(\ell!)^k}$ vertices and regular degree $(k-1)\ell$.

It is known that all 1-set permutations of length $k$ (usually known as  $k$-permutations) form the {\it symmetric group} denoted $Sym_k$ under composition of
$k$-permutations, each $k$-permutation $v=v_0v_1\cdots v_{k-1}$ taken as a bijection from the {\it identity} $k$-permutation $01\cdots(k-1)$ onto $v$ itself. 
A graph $ST^1_k$ with $k>1$ (which excludes $ST^1_1$) is the Cayley graph of $Sym_k$ with respect to the set of transpositions $\{(0\;i); i\in[k]\setminus\{0\}\}$.
Such $ST_k^1$ is denoted $ST_k$ in \cite{AK, D73}, where is proven that its vertex set admits a partition into $k$ E-sets, exemplified on the upper left of Figure~\ref{fig1}, for $ST^1_3=ST_3$, with the vertex parts of the partition differentially colored in black, red and green, for first entries 0, 1 and 2, respectively. Figure 1 of \cite{D73} shows a similar example for $ST^1_4=ST_4$. Note that the graphs $ST^\ell_k$ are vertex-transitive, but they are neither Cayley or Shreier graphs, for $\ell>1$.

\section{E$\,^\ell$-sets of star $\ell$-set transposition graphs}\label{Galphak}

The vertices of $ST^\ell_k$ can be seen as the $\ell$-set permutations $v_0\dots v_{k\ell-1}$ of the string $$\overbrace{0\cdots 0}\overbrace{1\cdots 1}\overbrace{2\cdots 2}\cdots\overbrace{(k-1)\cdots(k-1)}=0^\ell 1^\ell2^\ell\cdots(k-1)^\ell.$$         
Let us see, for each $i\in[k]$ with $ST^\ell_k\ne ST^1_2$, that the vertices $v=v_0\dots v_{k\ell-1}$ of $ST^\ell_k$ with first entry $v_0$ equal to $i$ form an E$\,^\ell$-set $S_i^k$ of $ST^\ell_k$. Since $ST^\ell_k\ne ST^1_2$ has girth larger than 3, then each D$\ell$S wrt $S_i^k$ of $ST^\ell_k$ is an independent set of $ST^\ell_k$.    
Extending the notion of an E-chain in \cite{D73}, we say that an E$\,^\ell$-chain is a countable family of nested graphs, each of which has an E$\,^\ell$-set.

\begin{figure}[htp]
\includegraphics[scale=0.88]{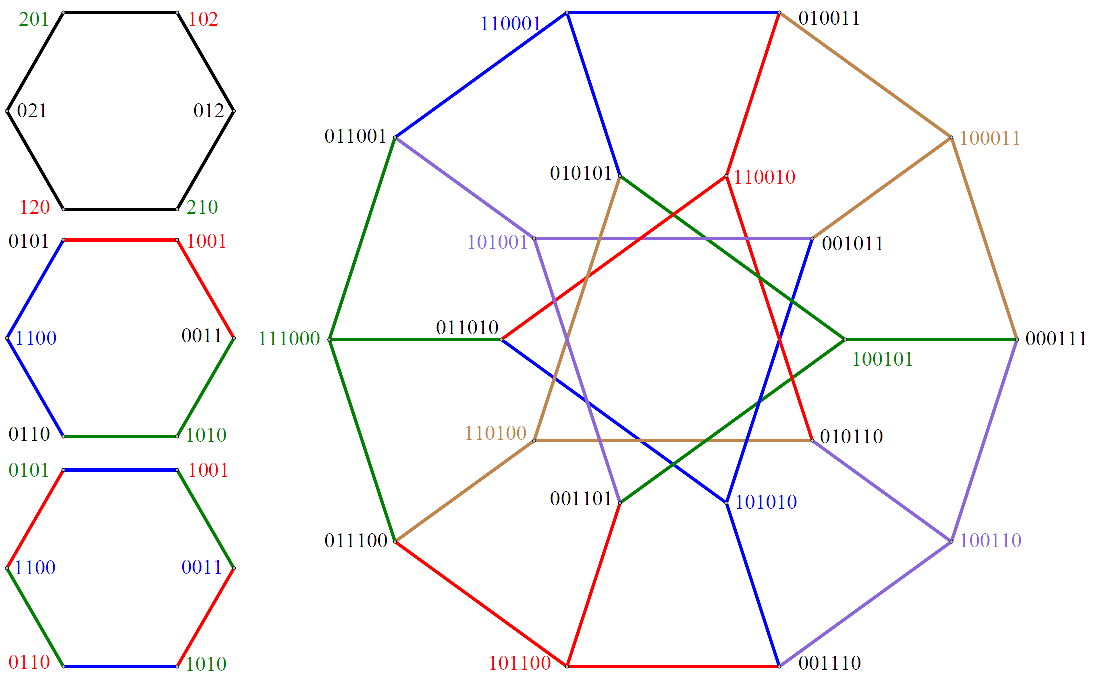}
\caption{The 6-cycles $ST^1_3=ST_3$ and $ST^2_2$, and the Desargues graph $ST^3_2$.}
\label{fig1}
\end{figure}

\begin{theorem}\label{teo1}\label{t} Let $0<\ell,k\in\mathbb{Z}$ and let $ST^\ell_k\ne ST^1_2$.
For each $i\in[k]$, the $\ell$-set permutations $v_0\dots v_{k\ell-1}$ of  
$0^\ell 1^\ell2^\ell\cdots(k-1)^\ell$ with first entry $v_0$ equal to $i$ form
an E$\,^\ell$-set $S_i^k$ of $ST^\ell_k$. In particular, the graphs $ST^\ell_k$ form an E$\,^\ell$-chain.
\end{theorem}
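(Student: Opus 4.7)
My plan is a direct counting argument hinging on the observation that membership in $S_i^k$ is determined entirely by the first coordinate of a vertex. I would first verify that $S_i^k$ is independent: any edge of $ST^\ell_k$ between two vertices of $S_i^k$ would arise from a star transposition swapping the common first entry $i$ with some later entry $v_j\neq i$, so the resulting vertex would have first entry $v_j\neq i$, contradicting its membership in $S_i^k$.

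For the domination count, I would take $v=v_0 v_1\cdots v_{k\ell-1}$ with $v_0\neq i$ and enumerate its neighbors in $S_i^k$. A neighbor of $v$ is obtained by swapping $v_0$ with some $v_j$ at a position $j\in[k\ell]\setminus\{0\}$ satisfying $v_j\neq v_0$, and this neighbor belongs to $S_i^k$ precisely when the new first entry equals $i$, i.e.\ when $v_j=i$. Since $v$ is an $\ell$-set permutation of $0^\ell 1^\ell\cdots(k-1)^\ell$ and $v_0\neq i$, the symbol $i$ occurs in $v$ exactly $\ell$ times, all at positions $\geq 1$. Each such position furnishes a legal star transposition (since $i\neq v_0$) and distinct positions yield distinct neighbors, so $v$ has exactly $\ell$ neighbors in $S_i^k$. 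Independence of the corresponding D$\ell$S is automatic from the hypothesis that the girth of $ST^\ell_k$ exceeds $3$, as already recorded in the excerpt.

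For the E$^\ell$-chain assertion, I would embed $ST^\ell_k$ into $ST^\ell_{k+1}$ by appending $\ell$ copies of the new symbol $k$ at the tail, via $v_0\cdots v_{k\ell-1}\mapsto v_0\cdots v_{k\ell-1}\,k\cdots k$. Any star $\ell$-set transposition in $ST^\ell_k$ lifts to one in $ST^\ell_{k+1}$ that leaves the appended block intact; conversely, any star transposition in $ST^\ell_{k+1}$ between two embedded vertices cannot touch the last $\ell$ positions, because both endpoints carry $k$ there. Hence the image is an induced subgraph isomorphic to $ST^\ell_k$, and, combined with the first part, the nested family $\{ST^\ell_k\}$ (starting from $k=3$ when $\ell=1$ and from $k=2$ when $\ell\geq 2$, so as to avoid $ST^1_2$) is an E$^\ell$-chain.

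I do not anticipate any genuine obstacle; the essential content is simply that the first-coordinate characterization of $S_i^k$ reduces the neighbor count to counting occurrences of $i$ among positions $\geq 1$. The only items that demand mild care are verifying independence of $S_i^k$ itself (not formally required by the bare definition of an E$^\ell$-set but implicit in the ``perfect code'' interpretation) and pinning down the precise sense of nesting intended in the extended E$^\ell$-chain framework.
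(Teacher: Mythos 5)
Your proof is correct and its core argument---for a vertex $v$ with $v_0=j\neq i$, counting the $\ell$ occurrences of the symbol $i$ at non-initial positions, each of which is swapped with $v_0$ to produce a distinct neighbor lying in $S_i^k$---is exactly the argument the paper gives. You go slightly further than the paper in two respects: you explicitly verify that $S_i^k$ is independent (the paper only records independence of each D$\ell$S via the girth hypothesis), and you supply a concrete nesting $ST^\ell_k\hookrightarrow ST^\ell_{k+1}$ by appending $\ell$ trailing copies of the new symbol $k$, together with the check that this image is induced, to substantiate the E$^\ell$-chain claim that the paper's proof asserts without argument. Both additions are sound and strengthen the write-up without changing the approach.
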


\begin{proof}
For fixed $i\in[k]$, each vertex $v=v_0v_1\cdots,v_{k\ell-1}$ of $E(ST^\ell_k)\setminus S_i^k$ has initial entry $v_0=j$, for some $j\in[k]$ such that $j\ne i$. Then, $v$ is adjacent to $\ell$ vertices of $S_i^k$ obtained by transposing the position of each of the $\ell$ entries $v_h=i$, ($h\in[k\ell]\setminus\{0\}$), with the position of that initial entry $v_0=j$. The graph induced by the resulting edges form: {\bf(a)} a copy $H$ of the complete bipartite graph $K_{1,\ell}$ (a 1-sphere) with $v$ as its sole degree-$\ell$ vertex (center of such 1-sphere); and {\bf(b)} the leaves of $H$ (this taken as a $v$-rooted tree) as the vertices $v^j\in S_i^k$
obtained from $v$ by transposing $v_0=i$ with those $v_h=j$. 
\end{proof}

\begin{corollary}
 The vertex set $V(ST^\ell_k)$ admits a partition into $k$ E$\,^\ell$-sets $S_i^k$, ($i\in[k]$). 
\end{corollary}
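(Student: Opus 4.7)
The plan is to leverage Theorem~\ref{teo1} directly: it has already been established that for each $i\in[k]$, the set $S_i^k$ of vertices $v=v_0v_1\cdots v_{k\ell-1}$ with $v_0=i$ is an E$\,^\ell$-set of $ST^\ell_k$. So the only thing left to verify is that the family $\{S_i^k : i \in [k]\}$ really is a partition of $V(ST^\ell_k)$, i.e.\ the sets are pairwise disjoint and their union exhausts $V(ST^\ell_k)$.

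First I would observe that each vertex $v \in V(ST^\ell_k)$ has a unique first entry $v_0 \in [k]$, by the definition of an $\ell$-set permutation of the string $0^\ell 1^\ell\cdots(k-1)^\ell$. Consequently $v$ lies in $S_{v_0}^k$ and in no other $S_i^k$, so the sets $S_i^k$ for $i\in[k]$ are pairwise disjoint. Conversely, since every $\ell$-set permutation begins with some symbol in $[k]$, the union $\bigcup_{i\in[k]}S_i^k$ equals $V(ST^\ell_k)$. This establishes the partition.

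There is essentially no obstacle here; the corollary is little more than a reformulation of Theorem~\ref{teo1}, making explicit that the $k$ E$\,^\ell$-sets indexed by the possible first entries cover every vertex exactly once. One could alternatively phrase this as saying that the map $v\mapsto v_0$ defines a coloring of $V(ST^\ell_k)$ by $k$ colors whose color classes are precisely the $S_i^k$, each of which has already been shown to be an E$\,^\ell$-set.
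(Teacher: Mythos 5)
Your proposal is correct and matches the paper's own argument: both note that each vertex lies in exactly one $S_i^k$ according to its first entry, so the $k$ sets already established as E$\,^\ell$-sets in Theorem~\ref{teo1} partition $V(ST^\ell_k)$. Your version merely spells out the disjointness and covering steps a bit more explicitly.
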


\begin{proof} The E$\,^\ell$-sets $S_i^k$ form a partition of $V(ST^\ell_k)$, since each such $S_i^k$ is composed precisely by the vertices $v=v_0v_1\cdots v_{k\ell-1}$ of $ST^\ell_k$ having initial entry $v_0=i$, which form one of the $k$ parts of the partition.
\end{proof}

Let $2ST^\ell_k$ be the multigraph obtained from $ST^\ell_k$ by replacing each edge $e$ of $ST^\ell_k$ by two parallel edges with the same endvertices of $e$. 

\begin{corollary}
Let $i\in[k]$. Each vertex of $S_i^k$ in $ST^\ell_k$ belongs to $(k-1)\ell$ D$\,\ell$Ss wrt $S_i^k$, where the induced graph of each such D$\,\ell$S is isomorphic to the complete bipartite graph $K_{1,\ell}$. 
The set of all such D$\ell$Ss, $\forall i\in[k]$, forms a partition of the edge set of $2ST^\ell_k$.
\end{corollary}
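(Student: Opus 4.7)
The plan is to read off both parts of the corollary from the $K_{1,\ell}$ structure exhibited in the proof of Theorem~\ref{teo1}: every D$\ell$S wrt $S_i^k$ is the leaf set $S(v)=\{v^0,\dots,v^{\ell-1}\}$ of a star $K_{1,\ell}$ uniquely associated to its center $v\in V(ST^\ell_k)\setminus S_i^k$, and I shall treat a D$\ell$S throughout as that labeled object, so that ``the induced graph of the D$\ell$S'' means the induced subgraph on $\{v\}\cup S(v)$, which is exactly the star $K_{1,\ell}$.

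For the first claim I would fix $i\in[k]$ and $u\in S_i^k$, and count the D$\ell$Ss wrt $S_i^k$ containing $u$ by running over their centers. The condition $u\in S(v)$ is equivalent to $v$ being a neighbor of $u$. Because every star $\ell$-set transposition changes the initial entry, each of the $(k-1)\ell$ neighbors of $u$ has first entry different from $i$ and hence lies in $V(ST^\ell_k)\setminus S_i^k$; each such neighbor supplies one admissible center, yielding exactly $(k-1)\ell$ D$\ell$Ss containing $u$. The $K_{1,\ell}$ shape of the corresponding induced graph is immediate from the construction in the proof of Theorem~\ref{teo1}.

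For the edge-partition claim, I would double-count. Any edge $e=\{u,v\}$ of $ST^\ell_k$ has its endpoints in two distinct parts of the vertex partition supplied by the preceding corollary, say $u\in S_i^k$ and $v\in S_j^k$ with $i\ne j$. Then $e$ appears once as a spoke of the $K_{1,\ell}$ centered at $v$ in the family of D$\ell$Ss wrt $S_i^k$, and once as a spoke of the $K_{1,\ell}$ centered at $u$ in the family wrt $S_j^k$; no D$\ell$S wrt $S_{i'}^k$ with $i'\notin\{i,j\}$ can use $e$, since neither endpoint of $e$ lies in $S_{i'}^k$. Hence each edge of $ST^\ell_k$ is covered exactly twice by the full collection of D$\ell$Ss taken over all $i\in[k]$, which is precisely the multiplicity of that edge in $2ST^\ell_k$. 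The only subtle point is the conventional one in the first paragraph: once a D$\ell$S is treated together with its center, both assertions reduce to the elementary incidence counts above, and no serious obstacle remains.
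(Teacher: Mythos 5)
Your proposal is correct and follows essentially the same route as the paper: both parts are read off from the $K_{1,\ell}$ stars constructed in the proof of Theorem~\ref{teo1}, the first by counting the $(k-1)\ell$ neighbors of a vertex of $S_i^k$ (all of which lie outside $S_i^k$ and serve as centers), the second by observing that each edge is a spoke of exactly one star in each of the two families indexed by the first entries of its endpoints, hence is covered with multiplicity two as required for $2ST^\ell_k$. Your version is in fact slightly more careful than the paper's, since you make explicit the convention that a D$\ell$S is tracked together with its center.
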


\begin{proof}
Since there are $k-1$ values $j\ne i$ in $[k]$, each vertex $v\in S_i^k$ belongs to $(k-1)\ell$ D$\ell$Ss wrt $S_i^k$. Since each such $v$ is the neighbor of $\ell$ vertices with first entry $v_0=j$, for each $j\in[k]$ with $j\ne i$, then the induced graph of each such D$\ell$S is isomorphic to $K_{1,\ell}$\,, as said in the proof of Theorem~\ref{t}. Also, each edge $e$ of $ST^\ell_k$ with endvertices $v$ and $w$ having first entries $i$ and $j$, respectively, is both a member of $S_i^k$ and of $S_j^k$. Thus, $e$ appears in $2ST^\ell_k$ as two parallel edges, edge with one endvertex in $S_i^k$ and another endvertex in $S_j^k$.
\end{proof}

\begin{example}\label{ex1} The graph $ST^2_2$ consists of the 6-cycle $(0011,1001,0101,1100,0110,1010)$ represented in the middle left of Figure~\ref{fig1} and showing in distinct edge shades the induced subgraphs of the composing D$2$Ss of the E$^2$-set $S_0^2=\{0011,0101,0110\}$, namely the D$2$S $\{0011,0101\}$ of 1001 wrt $S_0^2$, the D$2S \,$\{0011,0110\} of 1010 wrt $S_0^2$, and the D$2$S $\{0110,0101\}$ of 1100 wrt $S_0^2$.\end{example}

\begin{example}\label{ex2} The graph $ST^3_2$ is the Desargues graph drawn on the right of Figure~\ref{fig1}, where each subgraph $K_{1,3}$ induced by a D$3$Ss $S_0^2(v)$ of a vertex $v=0v_1\cdots v_5$ of $ST^3_2$ wrt $S_0^2$ has its edges in three different colors. The edge colors in this representation of $ST^3_2$ are seen to define ten monochromatic copies of $K_{1,3}$ centered at the vertices of the form $w=1w_1\cdots w_5$ in colors red, blue, green, hazel and violet (two vertex-disjoint monochromatic copies of $K_{1,3}$ per color), illustrating that each $v\in S_0^2$ is the intersection of $\ell=$ three 1-spheres, each 1-sphere contributing just one edge incident to $v$, where the three resulting edge colors are pairwise different.  
Those monochromatic subgraphs $K_{1,3}$ appear in ``opposing'' pairs, allowing to raise the question of how many colors are necessary to color such edge partitions. 
\end{example}

\begin{example}\label{ex3}
The vertices of $ST^2_3$ are the $\ell$-set permutations, ($\ell=2$), of $v_0^0=001122$, yielding a total of $\frac{6!}{2!2!2!}=\frac{720}{8}=90$ vertices. The regular degree of $ST^2_3$ is 4.
The graph $ST^2_3$ has the E$^3$-set $S_0^2=\{v_0\cdots v_5\in V(ST^2_3);v_0=0\}$. 
For example, $v=100122$ has $S^k_0(v)=S^2_0(v)=\{010122,001122\}$ as its D$\ell$S wrt $S$, 
and $v'=120120$ has $S^k_0(v')=S^2_0(v')=\{021120,020121\}$ as D$\ell$S wrt $S_0^2$. Each vertex $v$ in $S_0^2$ belongs to four D$2$Ss wrt $S_0^2$. 
While $ST^2_3$ has 90 vertices, $S_0^2$ has $\frac{90}{k}=\frac{90}{3}=30$ vertices.
For example, 010122 belongs to $S^2_0(100122)$, $S^2_0(110022)$, $S^2_0(210102)$ and $S^3_0(210120)$. Specifically, as in display (\ref{ccc}).
\begin{eqnarray}\label{ccc}\begin{array}{ccc}
S_0^3(100122)=\{010122,001122\},\\
S_0^3(110022)=\{010122,011022\},\\
S_0^3(210102)=\{010122,012102\},\\
S_0^3(210120)=\{010122,012120\}.\end{array}\end{eqnarray}\end{example}

\begin{example}\label{ex4}
The vertices of $ST^3_3$ are the $3$-set permutations of $v_0^0=000111222$, yielding a total of $|V(ST^3_3)|=\frac{(k\ell)!}{\ell!^k}=\frac{9!}{3!^3}=1680$ vertices. The regular degree of $ST^3_3$ is six.
The graph $ST^3_3$ has the set of 9-tuples $S_0^3=\{v_0\cdots v_8\in V(ST^3_3);a_0=0\}$ as an E$^3$-set. For example, $100011222$ has $\{000111222,001011222,010011222\}$ as its D$3$S wrt $S_0^3$, and $120120120$ has $\{021120120,020121120,020120121\}$ as its D$3$S wrt $S_0^3$.
While $ST^3_3$ has $1680$ vertices, $S_0^3$ has $\frac{1680}{k}=\frac{1680}{3}=560$ vertices. Each vertex of $S_0^3$ belongs to six D$3$Ss wrt $S_0^3$. For example, 010011222 belongs to the sets in display (\ref{c}).
\begin{eqnarray}\label{c}\begin{array}{c}S_0^3(100011222)=\{010011222,001011222,000111222\},\\
S_0^3(110001222)=\{010011222,010101222,011001222\},\\
S_0^3(110010222)=\{010011222,010011222,010001222\},\\
S_0^3(210011022)=\{010011222,010211022,012001022\},\\
S_0^3(210011202)=\{010011222,010211202,012001202\},\\
S_0^3(210011220)=\{010011222,010211220,012001200\}.\end{array}\end{eqnarray}\end{example}

\begin{corollary}
Let $i\in[k]$ and let $S_i^k$ be an E$\,^\ell$-set of $ST^\ell_k$. For each fixed vertex $v=v_0v_1\cdots v_{k\ell-1}\in V(ST^\ell_k)\setminus S_i^k$, the $\ell$ vertices of the $D\ell$S $S_i^k(v)$ (wrt $S_i^k$) bear bijectively the $\ell$ occurrences of $i$, each such occurrence as a value of a corresponding non-initial entry $v_h$ of $v$, ($h\in\{1,\ldots,k\ell-1\}$), transposed with the value $j$ at its initial entry.  
\end{corollary}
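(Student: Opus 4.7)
The plan is to observe that this corollary is essentially a restatement of what is already exhibited inside the proof of Theorem~\ref{t}, and to make the bijection explicit. Let $v = v_0 v_1 \cdots v_{k\ell-1} \in V(ST^\ell_k) \setminus S_i^k$, and write $v_0 = j$ with $j \ne i$. Since $v$ is an $\ell$-set permutation of $0^\ell 1^\ell \cdots (k-1)^\ell$, the symbol $i$ appears exactly $\ell$ times in $v$; none of those occurrences is at position $0$ because $v_0 = j \ne i$. So the set $P_i(v) := \{h \in \{1,\ldots,k\ell-1\} : v_h = i\}$ has cardinality exactly $\ell$.

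Next I would define the candidate bijection $\phi : P_i(v) \to S_i^k(v)$ by letting $\phi(h)$ be the vertex obtained from $v$ via the star transposition that swaps $v_0 = j$ with $v_h = i$. This swap is legal (since $v_h \ne v_0$), and the resulting string has initial entry equal to $i$, so $\phi(h) \in S_i^k$ and $\phi(h)$ is adjacent to $v$ in $ST^\ell_k$. Injectivity of $\phi$ is immediate: star transpositions at distinct non-zero positions change different entries of $v$, so distinct choices of $h$ produce distinct strings. For surjectivity, any $w \in S_i^k(v)$ is adjacent to $v$, hence arises from a star transposition swapping position $0$ with some position $h \ne 0$, and the condition $w_0 = i$ forces $v_h = i$, that is, $h \in P_i(v)$. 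Since $|P_i(v)| = \ell = |S_i^k(v)|$, the map $\phi$ is a bijection.

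There is no real obstacle here: the content of the corollary is precisely the enumeration already carried out in part {\bf(b)} of the proof of Theorem~\ref{t}. The only task is to phrase that enumeration as a bijection between the $\ell$ non-initial positions of $v$ holding the symbol $i$ and the $\ell$ elements of the D$\ell$S $S_i^k(v)$, which is what the argument above does.
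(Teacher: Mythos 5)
Your proof is correct, and it is worth noting that it is considerably more rigorous than the argument the paper itself gives for this corollary: the paper's ``proof'' consists only of pointing to the displays in Examples~\ref{ex3} and~\ref{ex4} and asserting that ``such behavior generalizes similarly for larger values of $\ell$ and/or $k$.'' What you do instead is extract the counting already implicit in the proof of Theorem~\ref{t} and package it as an explicit bijection $\phi\colon P_i(v)\to S_i^k(v)$, with $P_i(v)$ the set of non-initial positions of $v$ carrying the symbol $i$. Your verification of injectivity is sound (for distinct $h,h'\in P_i(v)$ the images differ at position $h$, where one string carries $j\ne i$ and the other carries $i$), and your surjectivity argument correctly uses the fact that every neighbor of $v$ in $ST^\ell_k$ arises from a star transposition at position $0$, so that $w_0=i$ forces the swapped entry to equal $i$. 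In short, you supply the actual proof that the paper defers to examples; the paper's approach buys brevity and concreteness via displays (\ref{ccc}) and (\ref{c}), while yours buys a genuine general argument that covers all $\ell$ and $k$ without appeal to ``it generalizes similarly.''
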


\begin{proof}
The behavior in the statement is exemplified in Examples~\ref{ex3}-\ref{ex4} (displays (\ref{ccc})-(\ref{c}), respectively). Such behavior generalizes similarly for larger values of $\ell$ and/or $k$.
\end{proof}

\begin{remark}\label{appl} If $ST^\ell_k$ is taken as the plan map of a city with streets represented by edges and corners represented by vertices, then an E$\,^\ell$-set $S_i^k$ may be planned to hold cops stationed at its vertices. In the case of an  event at a vertex $v$ of $ST^\ell_k$, if the vertex is in $S_i^k$, then a corresponding cop is at hand. Otherwise, there are $\ell$ cops at the vertices in $S_i^k(v)$, any of which can be present by moving along one sole edge. As another application, an error-correcting model of $ST^\ell_k$ will give for each received message a total of 1 (in $S_i^k$) or $\ell$ (in $ST^\ell_k\setminus S_i^k$) corrected messages.\end{remark}

\section{E$\,^{\ell-1}$-sets of star $\ell$-set transposition graphs}\label{s4}

Let $1<\ell\in\mathbb{Z}$, let $i\in[k\ell]\setminus\{0\}$, let $\Sigma_i^k$ be the set of vertices $v_0v_1\cdots v_{k\ell-1}$ of $ST^\ell_k$ such that $v_0=v_i$, ($i\in[k\ell]\setminus\{0\}$), and 
let $E_i^k$ be the set of edges having color $i$ in $G\setminus\Sigma_i^k$.
We will show that $\Sigma_i^k$ is an E$\,^{\ell-1}$-set of $ST^\ell_k$. Clearly, no edge of $E_i^k$ is incident to the vertices of $\Sigma_i^k$.

In order to state Theorem~\ref{t2}, let us recall that a {\it total coloring} of a graph $G$ is an assignment of colors to the vertices and edges of $G$ such that no two incident or adjacent elements (vertices and/or edges)
are assigned the same color \cite{tc-as}. 
A total coloring of $G$ such that the vertices adjacent to each $v\in V(G)$ together with $v$ itself are assigned pairwise different colors will be said to be an {\it efficient coloring}. This will be said to be {\it totally efficient} if: {\bf(a)} $G$ is $r$-regular; {\bf(b)} the color set is $[r]=\{0,1,\ldots,r-1\}$; and {\bf(c)}
each $v\in V(G)$ together with its neighbors are assigned all the colors in $[r]$. Now, let: {\bf(c')} each $v\in V(G)$ together with its neighbors are assigned all but one colors in $[r]$. 
If items (a) and (b) hold as above but (c) is replaced by (c'), then we say that the coloring is {\it almost totally efficient}.

\begin{theorem}\label{t2}  {\bf(I)} Let $k>1$, let $i\in[k\ell]\setminus\{0\}$ and
let $\Sigma_i^k$ be the set of vertices $v_0v_1\dots v_{k\ell-1}$ of $ST^\ell_k$ such that $v_0=v_i$. Then, $V^\ell_k$ admits a vertex partition into $k\ell-1$ E$\,^{\ell-1}$-sets $\Sigma_i^k$, ($i\in[k\ell]\setminus\{0\}$). In particular, the graphs $ST^\ell_k$ form an E$\,^{\ell-1}$-chain. {\bf(II)} 
Let $k>2$, let $j\in[k\ell]\setminus\{0\}$ and let
$E_j^k$ be the set of all edges of color $j$. Then, $ST^\ell_k\setminus\Sigma_i^k\setminus E_i^k$ is the disjoint union of 
$k\ell^{k-1}$ copies of $ST^\ell_{k-1}$. {\bf(III)} If $\ell=2$, then the objects presented in items {\rm(I)-(II)} yield a totally efficient coloring of $ST^\ell_k$, with each $\Sigma_i^k$ being an E-set, that is a perfect code of $ST^\ell_k$. In this case, $ST^\ell_k$ admits a vertex partition into E-sets. {\bf(IV)} If $\ell=3$, then the objects presented in items (I)-(II) yield an almost totally efficient coloring of $ST^\ell_k$.
\end{theorem}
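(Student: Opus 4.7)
The plan is to establish the four claims in order. For \textbf{(I)}, I would fix $i\in[k\ell]\setminus\{0\}$ and a vertex $v=v_0v_1\cdots v_{k\ell-1}$ with $v_0\ne v_i$, then count its neighbors in $\Sigma_i^k$. A neighbor $w$ is produced by swapping position $0$ with some position $j\ne 0$; the defining condition $w_0=w_i$ rules out $j=i$ (which would force $v_i=v_0$) and in the remaining cases reduces to $v_j=v_i$. Because $v_i$ occurs $\ell$ times in $v$ and none of its occurrences lies at position $0$, there are exactly $\ell-1$ valid values of $j$, proving that $\Sigma_i^k$ is an E$^{\ell-1}$-set. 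The collection $\{\Sigma_i^k\}$ is a partition of $V^\ell_k$ when $\ell=2$ and an $(\ell-1)$-to-one cover in general; the chain property also needs a nesting $ST^\ell_{k-1}\hookrightarrow ST^\ell_k$, which will fall out of part (II).

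For \textbf{(II)} I would analyze the connected components of $ST^\ell_k\setminus\Sigma_i^k\setminus E_i^k$ by identifying its \emph{frozen} entries. No surviving edge swaps position $i$, so the value $c=v_i$ is constant on any component. Moreover, a hypothetical swap at some $j\notin\{0,i\}$ with $v_j=c$ would deposit $c$ in position $0$, producing an endpoint in $\Sigma_i^k$, and is therefore also absent; this freezes the remaining $\ell-1$ occurrences of $c$ at specified positions of $\{1,\dots,k\ell-1\}\setminus\{i\}$. The remaining $(k-1)\ell$ free positions carry a multiset of $\ell$ copies of each symbol in $[k]\setminus\{c\}$, and the surviving edges are precisely the star transpositions among them at position $0$, reproducing a copy of $ST^\ell_{k-1}$. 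The component count is obtained by multiplying the $k$ choices of $c$ by the number of placements of the $\ell-1$ frozen $c$'s among the $k\ell-2$ eligible positions.

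For \textbf{(III)} and \textbf{(IV)} I would promote this structure to a total coloring: color every edge by the position at which it swaps, and every vertex $v$ by an index $i_v$ with $v\in\Sigma_{i_v}^k$. When $\ell=2$, the index $i_v$ is unique by the partition from (I); a partner-counting argument in the spirit of (I) then shows that the $2k-2$ neighbors of $v$ acquire exactly the colors $\{1,\dots,2k-1\}\setminus\{i_v\}$, so together with $v$ they exhaust all $2k-1$ colors. Standard checks confirm that adjacent and incident elements never agree in color, yielding the totally efficient coloring; each $\Sigma_i^k$ is then automatically a $1$-perfect code. When $\ell=3$ I would break the ambiguity of $i_v$ by a canonical rule (for example, the smaller of the two valid indices) and rerun the count; now $v$ together with its $3k-3$ neighbors realizes $3k-2$ distinct colors out of the $3k-1$ available, missing exactly one, yielding the almost totally efficient coloring.

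The main obstacle I anticipate is part \textbf{(IV)}: one must check that the tie-breaking rule does not create color collisions between $v$ and a neighbor and that exactly one color is missing from each local star. I expect to manage this by tracking, for each $v$, the pairing structure among the three occurrences of $v_0$ in $v$ and arguing that exactly one of them is consistently absent from the indices labeling the neighbors.
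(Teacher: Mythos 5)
Your arguments for (I)--(III) are essentially sound and in places more complete than the paper's own proof, which establishes the count of $\ell-1$ neighbours in $\Sigma_i^k$ exactly as you do (via the $\ell-1$ positions $j\notin\{0,i\}$ with $v_j=v_i$), then merely asserts (II), and defers (III) entirely to an external reference. Two of your observations are in fact corrections to the statement rather than gaps in your argument: each vertex lies in exactly $\ell-1$ of the sets $\Sigma_i^k$, so the asserted partition of $V^\ell_k$ exists only for $\ell=2$; and your frozen-entry analysis of the components gives $k\binom{k\ell-2}{\ell-1}$ copies of $ST^\ell_{k-1}$, not $k\ell^{k-1}$ --- your count is the one consistent with the paper's own data (Example~\ref{ST^3_3} reports $3\binom{7}{2}=63$ components for $ST^3_3$, whereas $k\ell^{k-1}=27$). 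Your proof of (III) via the fixed-point-free involution pairing equal entries among the positions $\{1,\dots,2k-1\}\setminus\{i_v\}$ is correct and does not appear in the paper at all.

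The genuine gap is (IV), exactly where you anticipated trouble, and the specific repair you propose fails. With $\ell=3$, fix $v$ and a value $c\neq v_0$ occupying positions $p_1<p_2<p_3$ of $v$. The three neighbours of $v$ obtained by swapping position $0$ with $p_1$, $p_2$, $p_3$ carry $c$ at positions $\{0,p_2,p_3\}$, $\{0,p_1,p_3\}$, $\{0,p_1,p_2\}$ respectively, so under the rule ``colour a vertex by the smaller of its two valid indices'' they receive colours $p_2$, $p_1$, $p_1$: two neighbours of $v$ collide and the closed neighbourhood is not rainbow, so the colouring is not even efficient. What is actually required is a system of distinct representatives --- each vertex choosing one of its two indices --- that is consistent simultaneously in every star containing that vertex; this is a global constraint that no local tie-break of the kind you describe can be guaranteed to satisfy without further argument. (For comparison, the paper's own treatment of (IV) is itself only a sketch resting on Examples~\ref{ST^3_2} and~\ref{ST^3_3} and assigns a single colour to each $1$-sphere centred in $\Sigma_{k\ell-1}^k$, so completing this part would require a genuinely new argument in either approach.)
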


\begin{proof}
The graph $ST^\ell_k$ has $\frac{(k\ell)!}{(\ell!)^k}$ vertices and regular degree $(k-1)\ell$.
Let $i=k\ell-1$ and let $i\in[k\ell]$. Assume without loss of generality that $i=k\ell-1$, corresponding to the last entry of the vertices of $ST^\ell_k$. Then, 
each vertex $v$ of $ST^\ell_k$ with differing first and last entry values is adjacent in $\ell-1$ different ways to a corresponding vertex of $\Sigma_i^k=\Sigma_{k\ell-1}^k$, yielding a total of $\ell-1$ such vertices of $\Sigma_i^k$. Thus, $\Sigma_i^k$ is an E$\,^{\ell-1}$-set. 
Note that the edge obtained by transposing the values of the first and last entries of $v$ is an edge of $E_i^k=E_{k\ell-1}^k$, and that all edges of such $E_i^k$ are obtained this way. This implies items (I) and (II) of the statement. Item (III) leads us to the situation of item (a) in Section~\ref{s1}, treated in detail in \cite{D}. For item (IV), assume without loss of generality that the missing color is $r-1=k\ell-1$. Then, exemplified in Examples~\ref{ST^3_2}-\ref{ST^3_3}, below, each 1-sphere centered at a vertex $v$ of $\Sigma_{k\ell-1}^k$ is assigned the only color $j\in[k\ell]$ distinct from $k\ell-1$ and not corresponding to any of its incident vertices. Clearly, $v$ is assigned color $j$. Thus, in general the missing color according to our assumptions is $k\ell-1$.
\end{proof}

\begin{example}\label{olvid}
The graph $ST^2_2$ of Example~\ref{ex1} has also the totally efficient coloring depicted on the lower left of Figure~\ref{fig1}, where $\Sigma_1^2=\{0011,1100\}$ is color blue, as is $E_1^2=\{(0101,1001),(0110,1010)\}$; $\Sigma_2^2=\{0101,1010\}$ is color green, as is $E_2^2=\{(0110,1100),$ $(0011,1001)\}$; and $\Sigma_3^2=\{0110,1001\}$ is color red, as is $E_3^2=\{(0011,1010),(0101,1100)\}$.\end{example}

\begin{remark}\label{occur} The total coloring of $ST^2_k$ will be referred to as its {\it color structure}. 
The $k2^{k-1}$ copies of $ST^2_{k-1}$ in $ST^2_k$ whose disjoint union is $ST^2_k\setminus\Sigma_i^k\setminus E_i^k$ inherit each a color structure that generalizes that of the 3-colored 6-cycles in $ST^2_3\setminus\Sigma_5^3$ and is similar to the color structure of $ST^2_{k-1}$.
\end{remark}

\begin{figure}[htp]
\hspace*{2cm}
\includegraphics[scale=0.77]{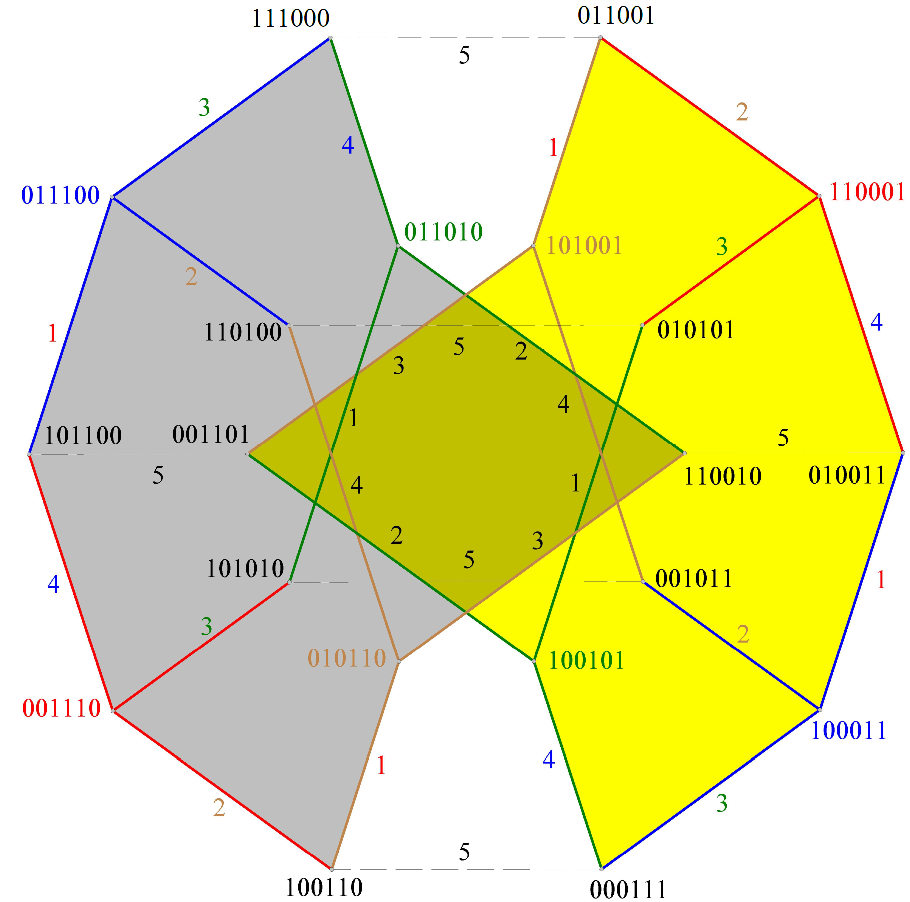}
\caption{The Desargues graph $ST^3_2$ revisited.}
\label{fig2}
\end{figure}

\begin{example}\label{ST^3_2} The Desargues graph $ST^3_2$ of Example~\ref{ex2} will now also be shown to contain the E$^2$-set $\Sigma_5^2$
depicted in Figure~\ref{fig2}, where $\Sigma_5^2$ is composed by the pairs of vertices in display (\ref{centered}), with notations colored as indicated in parentheses (in contrast to the E$^3$-set depicted on the right of Figure~\ref{fig1}),  that is: the vertices representing those 6-tuples with first, ($v_0$), and last, ($v_5$), entries having a common value, (either 0 or 1).
\begin{eqnarray}\label{centered}\begin{array}{lll}
011100&100011&(blue)\\
010110&101001&(hazel)\\
001110&110001&(red)\\
011010&100101&(green)
\end{array}\end{eqnarray}
 In fact, each subgraph $K_{1,3}$ induced by a D$3$Ss $\Sigma_5^2(v)$ of a vertex $v=v_0v_1\cdots v_4v_5=iv_1\cdots v_4j$ of $ST^3_2$ wrt $\Sigma_5^2$, ($i\ne j$, with notation in black), has its two edges other than their dashed black-colored edges in two different colors. Moreover, the edge colors in this representation of $ST^3_2$ define eight monochromatic copies of $K_{1,3}$ centered at the vertices of the form $w=w_0w_1\cdots w_4w_5=jw_1\cdots w_4 j$ in thick colors red, hazel, green and blue, as indicated in display (\ref{centered}) (two vertex-disjoint monochromatic copies of $K_{1,3}$ per color), illustrating that each $v\in ST^3_2\setminus\Sigma_5^2$ is the intersection of $\ell-1=$ two 1-spheres, each such 1-sphere contributing just one edge incident to $v$, where the two intervening edge colors are distinct.  
Those monochromatic subgraphs $K_{1,3}$ appear in ``opposing'' pairs, allowing to raise the question of how many colors are necessary to color such edge partitions.
However, observe that: {\bf(a)} the 1-factor $E_5^2$ conformed by the dashed black-colored edges induces $V(ST^3_2)\setminus\Sigma_5^2$; {\bf(b)} we can assign the said thick edge colors red, hazel, green and blue to the numbers $j=1$, 2, 3 and 4, respectively, so that the sole thick edge incident to any leaf vertex of a monochromatic copy of $K_{1,3}$ outside such a copy of $K_{1,3}$ has color $j$ if and only if $j$ is the color number of such $K_{1,3}$; 
{\bf(c)} $ST^3_2\setminus E_5^2$ has two components, represented by light-gray and yellow area colors, for $j=0$ and $j=1$, respectively; {\bf(d)} There are $2\times{{6}\choose{2}}=12$ connected components in $ST_3^2\setminus\Sigma_5^2\setminus E_5^2$, which consist of isolated vertices.
\end{example}

\begin{example}\label{ST^3_3} $ST^3_3$ has $\frac{9}{(3!)^3}=1680$ vertices and regular degree 6. Its vertices $v=v_0\cdots v_8$ have three entries $v_i$ with value $j$, for each of $j=0,1,2$. Those of them with $v_0=v_8$ form an E-set with 420 vertices, where $v_0=v_8=j$ happens for 140 vertices, for each of $j=0,1,2$. The remaining value $j$ in each such case (apart from those two occurring at $v_0$ and $v_8$) occurs at the entries $i=1,\ldots,7$ in $\frac{6!}{(3)^2}=20$ different ways. For instance, consider the "dashed black" edge $(v,w)=(101122200,001122201)$ in color 8 and the neighbors $v^1=011122200$ and $v^2=001112210$ of $v$, (resp. $w^1=100122201$ and $w^2=1011022201$ of $w$). Each of these four vertices determines a component of $ST^3_3\setminus\Sigma_8^3\setminus E_8^3$. In fact, there are $3\times{{7}\choose{2}}=63$ such components, each isomorphic to a copy of the Desargues graph as in Figure~\ref{fig1} or~\ref{fig2}. On the other hand, $v^1$ is incident to edges of colors $1,2,3,4,5,6$ whose remaining incident vertices are: $$101122200, 110122200, 111022200, 211102200, 2111202000, 211122000,$$ respectively. These in turn are adjacent via edges of color 7 to the vertices:
 $$001122210, 01022210, 011022210, 011102220, 011120220, 011122020,$$ respectively Thus,  the 1-sphere given as copy of $K_{1,6}$ centered at $v^1$ can be assigned color 7. Such a coloring is extensible to all 1-spheres of $ST^3_3$ centered at the members of $\Sigma_8^3$.

\end{example}

\end{document}